\newlength{\defbaselineskip} \setlength{\defbaselineskip}{\baselineskip}
\theoremstyle{plain}
\newtheorem{thm}{Theorem}
\newtheorem{cor}[thm]{Corollary}
\newtheorem{con}[thm]{Conjecture}
\newtheorem{lema}[thm]{Lemma}
\newtheorem{prop}[thm]{Proposition}
\newtheorem{obs}[thm]{Observation}
\newtheorem{ques}[thm]{Question}
\theoremstyle{definition}
\newtheorem{df}[thm]{Definition}
\newtheorem{exm}[thm]{Example}
\newtheorem{rem}[thm]{Remark}
\newtheorem{fact}[thm]{Fact}
\newtheorem{pr}{Algorithm}
\def\Z{\mathbb{Z}}
\def\Q{\mathbb{Q}}
\def\ob{\begin{obs}}
\def\kob{\end{obs}}
\def\dow{\begin{proof}}
\def\kdow{\end{proof}}
\def\tw{\begin{thm}}
\def\ktw{\end{thm}}
\def\hip{\begin{con}}
\def\khip{\end{con}}
\def\lem{\begin{lema}}
\def\klem{\end{lema}}
\def\ex{\begin{exm}}
\def\prog{\begin{pr}}
\def\kprog{\end{pr}}
\def\wn{\begin{cor}}
\def\kwn{\end{cor}}
\def\uwa{\begin{rem}}
\def\kuwa{\end{rem}}
\def\kex{\end{exm}}
\def\dfi{\begin{df}}
\def\kdfi{\end{df}}
\definecolor{zielony}{rgb}{0.5, 0.9, 0.1}
\definecolor{czerwony}{rgb}{0.9, 0.2, 0.1}
\definecolor{niebieski}{rgb}{0.3, 0.1, 0.9}
\def\fa{\begin{fact}}
\def\kfa{\end{fact}}
\definecolor{darkblue}{RGB}{0,0,160}
\title{Obstructions to combinatorial formulas for plethysm}
\author{Thomas Kahle} \address{Fakult\"at f\"ur Mathematik,
Otto-von-Guericke Universit\"at, 39106 Magdeburg, Germany}
\urladdr{\url{http://www.thomas-kahle.de}}
\author{Mateusz Micha\l ek} \address{Max Planck Institut for
Mathematics in the Sciences, Leipzig, Germany and Mathematical
Institute of the Polish Academy of Sciences, Warsaw, Poland}
\email{wajcha2@poczta.onet.pl}
\subjclass[2010]{Primary: 20G05, 11P21; Secondary: 11H06, 05A16,
52B20, 52B55, 20C15}
\date{}
\begin{document}
\begin{abstract}
Motivated by questions of Mulmuley and Stanley we investigate
quasi-poly\-no\-mials arising in formulas for plethysm.  We
demonstrate, on the examples of $S^3(S^k)$ and $S^k(S^3)$, that these
need not be counting functions of inhomogeneous polytopes of dimension
equal to the degree of the quasi-polynomial. It follows that these
functions are not, in general, counting functions of lattice points in
any scaled convex bodies, even when restricted to single rays. Our
results also apply to special rectangular Kronecker coefficients.
\end{abstract}

\maketitle

Many problems in representation theory have combinatorial solutions.
A~well-known, important example is the Littlewood--Richardson rule
which gives the multiplicities of isotypic components in the tensor
product of two irreducible $GL(n)$ representations
\cite{fulton91:_repres_theor}.  The solution is combinatorial in the
sense that the answer is given by the number of lattice points in
explicit rational convex polyhedra, i.e.~the multiplicities are equal
to values of an Ehrhart quasi-polynomial.  Berenstein and Zelevinsky
provided another interpretation of the Littlewood--Richardson
rule~\cite{BZ}, isomorphic, however, on the level of polytopes
\cite{pak2005combinatorics}.  The study of different polyhedral
structures turned out to be very useful.  Knutson and Tao
\cite{knutson1999honeycomb} showed that the honeycomb polytopes (in
spirit similar to \cite{BZ}) are nonempty if and only if they contain
a lattice point.  This was the crucial last step in the solution to
the Horn problem~\cite{horn1962eigenvalues} which goes back to Weyl's
work on eigenvalues of partial differential
equations~\cite{weyl1912asymptotische}.

In this paper we study the much more complicated plethysm coefficients
$m^{d,k}_\lambda$ defined by
\[
S^d(S^k W)=\bigoplus_{\lambda} m^{d,k}_\lambda S^\lambda W,
\]
where the sum is over partitions $\lambda$ of~$dk$.

The computation of the multiplicity functions in plethysms can be seen
as an operation on Schur polynomials~\cite{macdonald1998symmetric}.
Viewed like this, it is surprising that when the plethysm is written
in terms of other Schur polynomials, the coefficients are always
nonnegative.  Of course this must be true, because the coefficients
are multiplicities of irreducible representations, but it would be
desirable to have a combinatorial explanation for nonnegativity.
In \cite[Problem~9]{Stanley} Richard Stanley asked for a
\emph{positive} combinatorial method to compute plethysm coefficients.
A connection between plethysm and lattice point counting was shown at
least
in~\cite{KMplethysm,colmenarejo2015stability,christandl2012computing}.
These connections are not direct in the sense that plethysm
coefficients are not seen to equal counts, but always involve some
opaque arithmetics.

The functions $f(s)=m^{d,sk}_{s\lambda}$ and
$g(s)=m^{sd,k}_{s\lambda}$ share many properties with Ehrhart
functions of rational polytopes:
\begin{itemize}
\item Both $f$ and $g$ are nonnegative and $f(0)=g(0)=1$.
\item Both $f$ and $g$ are quasi-polynomials.  This deep fact follows
from~\cite[Corollary~2.12]{meinrenken1999singular}.
See~\cite[Remark~3.12]{KMplethysm}.
\item For regular $\lambda$, the leading term of $f$ is proportional
to the Ehrhart function of a rational polytope corresponding to the
Littlewood--Richardson rule~\cite[Section~4]{KMplethysm}.
\end{itemize}
\begin{rem}
The scaling factor between the leading coefficient of $f$ and the
Ehrhart function in the third bullet is the Plancherel measure on
Young diagrams.  It is conjectured that the same is true for
any~$\lambda$, however we are not aware of a proof.
\end{rem}

\begin{exm}
The multiplicity of $\lambda=s\cdot(2k-1,1)$ inside $S^2(S^{sk})$
equals $0$ when $s$ is odd and $1$ when $s$ even.  In particular, it
is equal to the number of integral points in the (zero-dimensional)
polytope $\{s/2\}$.  The multiplicity of $s\cdot(2k,k)$ inside
$S^3(S^{sk})$ is equal to the number of integral points in the
(one-dimensional) polytope $s\cdot[1/3,1/2]$ or $s\cdot [1/2,2/3]$.
\end{exm}

Ketan Mulmuley posed several conjectures concerning the behavior of
plethysm and Kronecker coefficients in relation to geometric
complexity theory \cite{mulmuley2001geometric,mulmuleyGCT6long}.
\begin{ques}\label{q:combinatorial}
Fix positive integers $k,d$, and $\lambda$ a partition of $kd$.  Are
the quasi-polynomials $s\mapsto m^{d,sk}_{s\lambda}$ and
$s\mapsto m^{sd,k}_{s\lambda}$ Ehrhart functions of rational
polytopes?
\end{ques}
The question about $m^{sd,k}_{s\lambda}$ comes directly from
\cite[Hypothesis~1.6.4]{mulmuleyGCT6long}.  We give a negative answer
for both functions in Remark~\ref{r:ehrhart-proof}.  Our main goal,
however, is a generalized version of these questions for which we need
some additional terminology.  Following
\cite[Section~5.1]{mulmuleyGCT6short} we define a \emph{shifted} or
\emph{inhomogeneous rational polytope} as a system of inequalities
\[
P (A,b,c) = \{x\in\mathbb{Q}^m : Ax \le b+c\},
\]
where $b,c$ are arbitrary rational vectors and $A$ is a rational
matrix.  Splitting the right hand side as $b + c$ is motivated by the
definition of the dilations of $P$ as
\[
P(A,sb,c) = \{x\in\mathbb{Q}^m : Ax \le sb + c\}.
\]
An \emph{asymptotic Ehrhart quasi-polynomial} is a counting function
of the form
\[
s\mapsto \#(P(A,sb,c)\cap\Z^m).
\]
The \emph{dimension} of a shifted rational polytope $P$ is by
definition the dimension of $P(A,sb,c)$ for large~$s$ (for small $s$ the
polytope can be empty).  Contrary to the case of Ehrhart
quasi-polynomials, an asymptotic Ehrhart quasi-polynomial does not
need to be a quasi-polynomial, although it is for large arguments.
Moreover, a quasi-polynomial may be an asymptotic Ehrhart
quasi-polynomial but not an Ehrhart quasi-polynomial.  Asymptotic
Ehrhart quasi-polynomials do not have to satisfy Ehrhart reciprocity.
Further, the dimension of a shifted rational polytope can be strictly
greater than the degree of the associated asymptotic Ehrhart
quasi-polynomial.  See~\cite{stanley1982linear}
and~\cite[Chapter~I]{stanley96:_combin_commut_algeb} for structure
theory of asymptotic Ehrhart quasi-polynomials and the relation to
linear diophantine equations.

In \cite[Hypothesis~5.3]{mulmuleyGCT6short} it is conjectured that the
multiplicity of $s\cdot\pi$ in $S^{s\cdot\lambda}(S^\mu)$ is an asymptotic
Ehrhart quasi-polynomial with additional complexity-theoretic
properties.  The asymptotic part of the conjecture is motivated by the
failure of Ehrhart type explanations already for Kronecker
coefficients~\cite{briand2009reduced,King} which are often considered
less complicated than plethysm coefficients.  In the present paper we
make progress towards a negative answer of the following simplified
version.
\begin{ques}\label{q:mulmuley2}
Is $m^{sd,k}_{s\lambda}$ an asymptotic Ehrhart quasi-polynomial?
\end{ques}

Our main result, Theorem~\ref{t:main}, implies a negative answer to
both cases in Question~\ref{q:combinatorial} and strong restrictions
on a positive solution to Question~\ref{q:mulmuley2}.  We show that
$m^{sd,k}_{s\lambda}$ need not be an asymptotic Ehrhart
quasi-polynomial of a polytope of dimension equal to the degree of its
growth.  At the moment we are not able either to exclude or confirm
that $m^{sd,k}_{s\lambda}$ is an asymptotic Ehrhart quasi-polynomial
for an inhomogeneous polytope of dimension strictly larger than its
degree.


In previous work the authors gave a formula for plethysm coefficients,
which is a sum of Ehrhart functions of various polytopes with
(positive and negative) coefficients~\cite{KMplethysm}.  This allows
to gather experimental data on the questions for many rays.
A-posteriori, the specific plethysm in Theorem~\ref{t:main} can also
be confirmed using well-known formulas for $S^3(S^k)$.  General
formulas for $S^k(S^3)$ are unknown, though.  Our methods are inspired
by \cite{King, briand2009reduced}.
\begin{thm}\label{t:main}
The multiplicity functions of $S^{s\cdot(7,5,0)}$ in $S^3(S^{4s})$ and
$S^{4s}(S^3)$ equal
\begin{align*}
  \phi: s & \mapsto \frac{s+r(s)}{3}
%
%
%
%
%
\end{align*}
where $r(s)$ has period $6$ and takes the values $3,-1,1,0,2,-2$ on
respectively the integers $0,\dots,5$.  There do not exist rational
vectors $a,b,c \in\mathbb{Q}^n$ (of arbitrary length $n$) such that
$\phi$ equals the counting function of a one-dimensional inhomogeneous
polytope $P(a,b,c)$.
%
In particular, $\phi$ is not an Ehrhart function of any rational
polyhedron.
\end{thm}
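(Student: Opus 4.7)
My plan is to prove the theorem in two independent parts: first establish the explicit formula $\phi(s) = (s + r(s))/3$ for both multiplicity functions, then prove that no one-dimensional inhomogeneous polytope realizes $\phi$.

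For the formula, I would compute $m^{3, 4s}_{s(7,5,0)}$ using classical expressions for $S^3(S^n)$ going back to Thrall, specialized to $n = 4s$ and $\lambda = s(7,5,0)$. For $m^{4s, 3}_{s(7,5,0)}$, I would apply the signed-Ehrhart formula from~\cite{KMplethysm}; the small length of $\lambda$ and the low degree $3$ should make the resulting sum manageable. Both computations should yield the claimed quasi-polynomial.

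For the obstruction, assume for contradiction that $a, b, c \in \mathbb{Q}^n$ exist with $\phi(s) = \#(P(s) \cap \mathbb{Z})$ for all nonnegative integers $s$. Since $\phi$ is positive and finite for $s \geq 2$, $P(s)$ is a bounded nonempty interval for large $s$, and the active upper and lower constraints stabilize to unique linear envelopes, so $P(s) = [As + A', Bs + B']$ for some $A, B, A', B' \in \mathbb{Q}$. Therefore, for large $s$,
\[
\phi(s) = \lfloor Bs + B' \rfloor - \lceil As + A' \rceil + 1 = (B - A) s + (B' - A' + 1) - F(s),
\]
where $F(s) := \{Bs + B'\} + \{-As - A'\}$ with $\{\cdot\}$ denoting the fractional part in $[0,1)$. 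Matching linear terms gives $B - A = 1/3$, and consequently
\[
F(s+1) - F(s) = \tfrac{1}{3}\bigl(r(s) - r(s+1)\bigr).
\]
For $s = 0, 1, \ldots, 5$ these increments form the sequence $(4/3,\, -2/3,\, 1/3,\, -2/3,\, 4/3,\, -5/3)$, comprising \emph{four} distinct values.

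The key arithmetic observation is that as $s$ increments by $1$, the quantity $\{Bs + B'\}$ changes by exactly $\{B\}$ or $\{B\} - 1$, and likewise $\{-As - A'\}$ changes by $\{-A\}$ or $\{-A\} - 1$. Hence $F(s+1) - F(s)$ lies in the three-element set $\{c,\, c - 1,\, c - 2\}$ where $c := \{B\} + \{-A\}$. Four required values cannot fit into at most three achievable ones, yielding the desired contradiction. The main difficulty lies in the plethysm computation itself, particularly for $S^{4s}(S^3)$ where no simple closed formulas are available; once the quasi-polynomial is known, the obstruction argument is elementary, amounting to the rigid arithmetic structure of sums of two fractional parts as $s$ varies.
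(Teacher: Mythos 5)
Your formula derivation is routine (though you could save yourself the direct computation of $m^{4s,3}_{s(7,5,0)}$ by invoking Hermite reciprocity, which the paper uses precisely to reduce the two multiplicity functions to a single one). Your obstruction argument, however, is genuinely different from and arguably cleaner than the paper's. The paper reduces to the stabilized interval $Q(s)=[sb+c,\, s(b+\tfrac13)+c']$ just as you do, normalizes $b,c$ to fundamental ranges, and then carries out a bespoke case analysis on which integers can lie in $Q(4)$, $Q(5)$, $Q(6)$, successively narrowing the feasible interval for $b$ until it is empty. Your approach instead extracts the universal arithmetic constraint: writing $\phi(s)=\lfloor Bs+B'\rfloor-\lceil As+A'\rceil+1$, the correction $F(s)=\{Bs+B'\}+\{-As-A'\}$ must satisfy $F(s+1)-F(s)\in\{c,c-1,c-2\}$ with $c=\{B\}+\{-A\}$, because each fractional part increments by either $\{\cdot\}$ or $\{\cdot\}-1$. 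Since $(r(s)-r(s+1))/3$ takes the four values $4/3,\,-2/3,\,1/3,\,-5/3$ over a period (with spread $3>2$, so even two of them already clash), the constraint is violated. This is correct; in fact it is slightly stronger than what you stated, since the spread of achievable increments is at most $2$ while $4/3-(-5/3)=3$. Your structural argument has the advantage of immediately generalizing: it gives a checkable numerical criterion (increments of the periodic correction term must lie in a window of length $2$ and form at most three residue-shifted values) for \emph{any} candidate linear quasi-polynomial, whereas the paper's case analysis would have to be redone for each example. One small caveat worth making explicit in a writeup: the envelope reduction to a single pair of linear bounds for large $s$ requires handling constraints $a_i=0$ and slope ties among the $b_i/a_i$, but both are routine and your sketch of it is adequate.
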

\begin{rem}\label{r:ehrhart-proof}
Before giving a general proof, it is easy to see that $\phi$ cannot
be an Ehrhart function: it violates Ehrhart--Macdonald
reciprocity~\cite{ehrhart1967I,macdonald1971polynomials}.  The value
$f(-s)$ of an Ehrhart quasi-polynomial $f$ of a rational polytope $P$
at a negative integer argument counts (up to a global sign) the number
of interior lattice points in~$s\cdot P$.  In particular,
$|f(-s)|\leq f(s)$.  However,
\[
-\phi(-1)=1>0=\phi(1).
\]
Interestingly, the jumps in values that make Ehrhart reciprocity fail
are also the crucial ingredient for our proof of non-representability
by a one-dimensional inhomogeneous polytope.
\end{rem}
\begin{proof}
The equality of the multiplicities of the given ray in both plethysms
is a consequence of Hermite reciprocity \cite{hermite1854},
\cite[Exercise~6.18]{fulton91:_repres_theor}.  There are now various
ways to determine the formula from the interpretation as a
multiplicity in $S^3(S^{4s})$.  One is to simply evaluate the explicit
formula from~\cite{KMplethysm} along a ray.  Another way is to observe
that the function must exhibit linear growth and that its period is at
most six~\cite{CGR83, manivel2014secants}.  With this information and
some values the function can be interpolated.

Let $P$ be an inhomogeneous line segment as in the statement.  Then
the $s$-th dilation of $P$ can be written as
\[
P (s) = \{x\in\Q: \max_i (sb_i + c_i) \le x \le \min_i (sb'_i + c'_i)\}.
\]
Asymptotically, $P(s)$ becomes an interval of length~$\frac{s}{3}$.
This means that there exists an $s_0$ such that for all $s > s_0$
\begin{equation}\label{eq:main_bound}
P(s) = \{x\in\Q: s b + c \le x \le s(b+\textstyle\frac{1}{3}) + c')\} =: Q(s),
\end{equation}
for some $b,c,c'\in\Q$.  Making $s_0$ even larger it can be assumed
that $s_0b$ are integers and that $s_0$ is divisible by~6 (the period
of~$\phi$).  Since $\#Q(s)$ and $\#P(s)$ have the same linear term and
constants that are $s_0$-periodic (by the divisibility assumptions)
they agree for all~$s$.

The proof of nonexistence of the family $Q(s)$ such that
$\phi(s) = \#Q(s)$ is by examination of constraints on $b$, coming
from the known values of the counting function~$\phi$.  Without loss
of generality we have
\begin{equation}
\label{eq:shiftP2} 0 \le b < 1.  
\end{equation}
Indeed, changing $b$ by an integer also only shifts $Q$ by an integral
value.

As $\phi(5)+2=\phi(6)$ there must be at least two integers $x$ such
that $5(b+\frac{1}{3})+c'<x\leq 6(b+\frac{1}{3})+c'$. Hence,
$b+\frac{1}{3}>1$.  In particular, for any $s$ there is always at
least one integer $x$ satisfying
$s(b+\frac{1}{3})+c'<x\leq (s+1)(b+\frac{1}{3})+c'$ (an interval of
length $>1$ must contain an integer). On the other hand, by
\eqref{eq:shiftP2} there is at most one integer $x$ satisfying
$sb+c'<x\leq (s+1)b+c'$ (an interval of length $<1$ may contain at
most one integer).  The above two intervals are the difference between
$P(s)$ and $P(s+1)$.  It follows that $\phi(s)$ is nondecreasing. This
contradicts $\phi(0)=1>0=\phi(1)$.
\end{proof}

\begin{rem}
We have confirmed that all examples with two rows and fewer boxes are
in fact Ehrhart functions.  In this sense, our counter-example is
minimal.
\end{rem}

\begin{rem}
A different way to describe an inhomogeneous polytope is by linear
integral equations and nonnegativity.  In this setting Stanley gave a
general reciprocity theorem which relates the Hilbert series (expanded
at $\infty$) of the module of positive integral solutions of linear
Diophantine equations to the Hilbert series of negative solutions to
the same equations.  In \cite[Theorem~4.2]{stanley1982linear} there
are, however, combinatorially defined correction terms.
\end{rem}

%

\begin{rem}
While this is not visible from the representation
in~\cite{KMplethysm}, a general theorem of Meinrenken and Sjamaar
\cite{meinrenken1999singular} implies that the chambers of plethysm
quasi-polynomials are polyhedral cones (see
\cite[Remark~3.12]{KMplethysm}, \cite{paradan2015witten}).  One may
thus ask the stronger question if they are Ehrhart quasi-polynomials
in general.  Of course, Theorem~\ref{t:main} gives a negative answer
to this much stronger property too.
\end{rem}

Lattice point counting in polytopes and Ehrhart functions are arguably
the most natural combinatorial explanations one may hope for here, but
they are not the only ones. Our theorem also excludes other
possibilities.
\begin{cor}
The plethysm coefficients are not positive combinations of counting
functions of lattice points in integral scalings of any convex
regions.
\end{cor}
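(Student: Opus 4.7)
The plan is to extend the Ehrhart--Macdonald reciprocity obstruction from Remark~\ref{r:ehrhart-proof} from a single polytope to a positive combination. Suppose for contradiction that
\[
\phi(s) = \sum_{i} a_i N_i(s), \qquad s \in \mathbb{Z}_{\ge 0},
\]
with $a_i > 0$ and $N_i(s) = \#(s C_i \cap \mathbb{Z}^{n_i})$ for convex regions $C_i$.

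The first step, which I expect to be the main obstacle, is the reduction to the case where each $C_i$ is a rational polytope, so that $N_i$ is the Ehrhart quasi-polynomial of $C_i$. For arbitrary convex bodies the counting function need not be a quasi-polynomial (the Gauss circle problem is the classical example), hence reciprocity is not immediately available. However, the hypothesis forces $\sum a_i N_i$ to agree with the quasi-polynomial $\phi$, and only finitely many evaluations of the $N_i$ will enter the inequalities below. Standard inner and outer rational polyhedral approximation of the $C_i$ should therefore allow us to replace each $C_i$ by a rational polytope without affecting the argument.

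Once in the polytopal setting, Ehrhart--Macdonald reciprocity gives
\[
N_i(-s) = (-1)^{\dim C_i}\,\#\!\big(\mathrm{relint}(s C_i) \cap \mathbb{Z}^{n_i}\big),
\]
and in particular $|N_i(-s)| \le N_i(s)$ for every positive integer~$s$. Summing with the triangle inequality then yields
\[
|\phi(-s)| \;\le\; \sum_i a_i\, |N_i(-s)| \;\le\; \sum_i a_i\, N_i(s) \;=\; \phi(s).
\]
Specializing at $s = 1$ and invoking Remark~\ref{r:ehrhart-proof}, which computes $|\phi(-1)| = 1 > 0 = \phi(1)$, produces the desired contradiction. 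Thus no such positive combination can represent~$\phi$, which gives the corollary.
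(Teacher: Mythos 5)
Your reciprocity computation is the right final step, and once each $C_i$ is known to be a rational segment the argument is exactly the one the paper intends (the paper spells out Ehrhart reciprocity for positive combinations in the paragraph immediately following the corollary). But the reduction you yourself flag as ``the main obstacle'' is a genuine gap, and the approximation idea cannot close it. The issue is not that only finitely many values of $N_i$ enter: the inequality $|N_i(-s)|\le N_i(s)$ presupposes that $N_i$ is a quasi-polynomial, so that $N_i(-s)$ is even defined, and for a general convex region that structure is simply absent. Replacing $C_i$ by nearby inner and outer rational polytopes changes $N_i(s)$ for infinitely many $s$ and destroys the hypothesized identity $\phi=\sum_i a_i N_i$; the fact that $\phi$ itself is a quasi-polynomial gives you no control over the individual summands.

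The paper avoids all of this with a dimension observation that is missing from your write-up. Because the $a_i$ are strictly positive there is no cancellation among the $N_i$, and since $\phi$ grows only linearly, any $C_i$ that contributes must be (at most) one-dimensional. One-dimensional convex sets are intervals, hence already polyhedra, so each $N_i$ is automatically the Ehrhart quasi-polynomial of a rational segment and no approximation is needed. From there your reciprocity step, or a direct appeal to Theorem~\ref{t:main}, finishes the proof. In short, what your argument lacks is precisely the ``positivity plus degree one forces the regions to be segments'' step; once you have it, the rest of your proposal goes through and the approximation detour is unnecessary.
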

\begin{proof}
One-dimensional convex sets are polyhedra and the counting function in
Theorem~\ref{t:main} would need to specialize to a positive
combination of Ehrhart functions.
\end{proof}
%
The plethysm counting function in Theorem~\ref{t:main} cannot be
written as a positive combination of Ehrhart quasi-polynomials as
these also have to satisfy Ehrhart reciprocity.  However, it can be
written as a sum of an asymptotic Ehrhart quasi-polynomial and an
honest Ehrhart quasi-polynomial as in the following proposition (which
is easy to confirm).
\begin{prop}\label{p:sum}
Fix a small rational $\varepsilon > 0$. Let
$P(s) = [\varepsilon,s\cdot\frac{1}{3}+\varepsilon]$ and
$Q(s) = s\cdot \{\frac{1}{2}\}$.  Then
$\phi(s) = \#(P(s)\cap\Z) + \#(Q(s) \cap \Z)$.
\end{prop}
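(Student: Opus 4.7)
The plan is to recognise each summand on the right as a quasi-polynomial, observe that the periods and linear terms line up with those of $\phi$, and then confirm the identity by evaluation on one period.

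First I would compute $\#Q(s)$: since $Q(s)$ consists of the single rational point $s/2$, it contains a lattice point exactly when $s$ is even, so $\#Q(s)$ equals $1$ on even $s$ and $0$ on odd $s$, a quasi-polynomial of period~$2$.

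Next I would analyse $\#P(s)$. For $\varepsilon > 0$ sufficiently small (the precise bound being irrelevant, since we will ultimately only inspect finitely many $s$) one has
\[
\#P(s) = \lfloor s/3 + s\varepsilon\rfloor - \lceil s\varepsilon\rceil + 1,
\]
and for $s \ge 1$ the smallness of $\varepsilon$ forces $\lceil s\varepsilon \rceil = 1$ and $\lfloor s/3 + s\varepsilon \rfloor = \lfloor s/3 \rfloor$, so $\#P(s) = \lfloor s/3 \rfloor$. This is a quasi-polynomial of period~$3$ with linear term $s/3$. Consequently $\#P(s) + \#Q(s)$ is a quasi-polynomial whose period divides $\operatorname{lcm}(2,3)=6$ and whose linear term is $s/3$, matching the linear term and the period of $\phi$.

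Two quasi-polynomials of period dividing~$6$ coincide once they agree on six consecutive integers, so it remains to check the identity at, say, $s = 1,\dots,6$. The values of $\phi$ on this range are $0, 1, 1, 2, 1, 3$, and those of $\#P(s) + \#Q(s)$ decompose as $0+0$, $0+1$, $1+0$, $1+1$, $1+0$, $2+1$, which agree termwise. The only delicate point is the choice of $\varepsilon$ small enough that the floor/ceiling simplification of $\#P(s)$ is uniform across the period; the rest of the argument is essentially bookkeeping.
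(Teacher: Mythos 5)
Your computation of the values at $s=1,\dots,6$ is right, and the general strategy (identify each summand as a quasi-polynomial, match linear terms, then verify over one period) is the same one used in the paper. The problem is that the key structural claim in the middle of your argument is false: for a \emph{fixed} rational $\varepsilon>0$, the function $s\mapsto\#\bigl(s[\varepsilon,\tfrac13+\varepsilon]\cap\Z\bigr)$ is \emph{not} a quasi-polynomial of period $3$, and it is \emph{not} equal to $\lfloor s/3\rfloor$ for all $s\ge 1$. Your simplification $\lceil s\varepsilon\rceil=1$ and $\lfloor s/3+s\varepsilon\rfloor=\lfloor s/3\rfloor$ only holds while $s\varepsilon$ is small; once $s$ reaches the order of $1/\varepsilon$ it breaks. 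Concretely, if $\varepsilon=1/N$ then at $s=N$ the interval $[s\varepsilon,\,s/3+s\varepsilon]=[1,\,N/3+1]$ has lower endpoint exactly at the lattice point $1$, giving $\#P(N)=\lfloor N/3\rfloor+1$; taking $N\equiv 4\pmod 6$ one checks $\#P(N)+\#Q(N)=\lfloor N/3\rfloor+2=\phi(N)+1$, so the asserted identity actually fails there. More generally the true period of $\#P$ is $\operatorname{lcm}(3,\operatorname{denom}\varepsilon)$, so the ``period divides $6$'' premise and the ``check six consecutive values'' conclusion do not go through. You acknowledge that the floor simplification is only uniform over ``finitely many $s$,'' but the proposition is an identity of functions on all of $\Z_{\ge 0}$, so restricting to finitely many $s$ is exactly where the argument loses its footing.

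The point you are missing is the intended reading of $P$. The surrounding text explicitly says $\phi$ is a sum of an \emph{asymptotic} (i.e.\ inhomogeneous, shifted) Ehrhart quasi-polynomial and an honest one; indeed $\phi$ cannot be a sum of honest Ehrhart counting functions because those would still satisfy the reciprocity bound $|f(-s)|\le f(s)$, which $\phi$ violates. So $P$ must be read as a shifted polytope in the sense of the paper's definition $P(s)=\{x: Ax\le sb+c\}$: here $b=(0,\tfrac13)$, $c=(-\varepsilon,\varepsilon)$, giving $P(s)=[\varepsilon,\,s/3+\varepsilon]$ (the $\varepsilon$ is a \emph{constant} shift, not scaled with $s$). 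With that reading, for any $0<\varepsilon<\tfrac13$ one has
\[
\#P(s)=\lfloor s/3+\varepsilon\rfloor-\lceil\varepsilon\rceil+1=\lfloor s/3\rfloor
\]
for \emph{all} $s\ge 0$, since $\{s/3\}\in\{0,\tfrac13,\tfrac23\}$ and $\varepsilon<\tfrac13$. Now $\#P$ genuinely has period $3$, $\#Q$ has period $2$, both have leading coefficient $\tfrac13$ resp.\ $0$ matching $\phi$, and your six-value check (supplemented with $s=0$, where $\#P(0)+\#Q(0)=0+1=\phi(0)$) legitimately finishes the proof. With the corrected reading of $P$, the rest of your bookkeeping is exactly the ``very easy to confirm'' verification the paper has in mind.
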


\begin{rem}
An interesting open question is the following. Suppose we are given a
piecewise quasi-polynomial whose chambers are cones.  Is there a
finite computational test if the quasi-polynomial is an (asymptotic)
Ehrhart function?  Assume it is not an Ehrhart function globally, but
is an Ehrhart function on each individual ray.  Is there a finite
number of rays such that finding polytopes for each of the finitely
many rays proves the Ehrhart nature for all rays?  Is it some
multi-dimensional generalization of Ap\'ery sets known from numerical
semigroups?
\end{rem}

\begin{rem}
According to \cite[Remark~3.8]{KMplethysm} the function $\phi$ in
Theorem~\ref{t:main} also counts, for example, the multiplicity of
$\lambda = s\cdot (7+2t,5+2t,2t)$ in $S^3(S^{s\cdot (5+2t)})$ for any
positive $t$.  Therefore counterexamples also exist for strictly
interior rays.
\end{rem}

Recently, there has been a lot of interest in \emph{Kronecker
coefficients}~\cite{manivel2014asymptotics2,
briand2009reduced,sam2015stembridge,baldoni2015multiplicity}.  These
are different from plethysm, but also hard to compute and important in
geometric complexity theory.  It is known that on rays they are given
by quasi-polynomials \cite[Theorem 1]{manivel2014asymptotics} that do
not have to be Ehrhart functions~\cite{King}.  However, the problem is
open for specific rays that are important in GCT.  Emmanuel Briand
pointed us towards following observation.  Two row partitions are
useful because Hermite reciprocity applies to them.  Further, by the
work of Vallejo \cite{Vallejo}, cf.~\cite{macdonald1998symmetric}, the
plethysm coefficients in Theorem~\ref{t:main} are equal to special
Kronecker coefficients:
\[
\phi(s)=K(3^{4s},3^{3s},(7s,5s))=K((4s,4s,4s),(4s,4s,4s),(7s,5s)),
\]
where $K$ depends on three partitions and is the Kronecker
coefficient.  Indeed, let $p_n(a,b)$ denote the number of Young
diagrams with $n$ boxes contained inside an $a\times b$ rectangle.
Both the multiplicity of $\lambda$ with two rows in the plethysm
$S^a(S^b)$ and the Kronecker coefficient $K(a^{b},a^{b},\lambda)$ can
be expressed as the difference
\[
p_{\lambda_2}(a,b)-p_{\lambda_2-1}(a,b).
\]
For example by transposition, this immediately implies Hermite
reciprocity.
\begin{exm}
The multiplicity of $(7,5)$ inside $S^4(S^3)$ equals zero. Here we
have $4$ diagrams with $5$ boxes inside a $4\times 3$ rectangle:
$(4,1),(3,2),(3,1,1),(2,2,1)$. We also have $4$ such diagrams with $4$
boxes: $(4,0),(3,1),(2,2),(2,1,1)$, hence the difference is indeed
zero.
\end{exm}
The rectangular Kronecker coefficients $K(a^b,a^b,\lambda)$ are of
particular interest in GCT (see e.g.~\cite{ChristianandGretta}).
Moreover, they exactly govern unimodality of $q$-binomial
coefficients, observed already in 1878 by Sylvester (see
\cite[p.~2]{PakPanova}).

In a different direction, the following interesting question was
recently brought to our attention by Mich\`ele Vergne.
\begin{ques}
Are the plethysm coefficients Ehrhart polynomials after rescaling?
Specifically, are $\tilde f(s):=f(ks)$ or $\tilde g(s):=g(ks)$ Ehrhart
polynomials for some $k$?
\end{ques}
One can prove that the answer is positive when $f$ or $g$ are of
degree one, thus to find a counterexample one needs to consider
partitions with more than one row.

\subsection*{Acknowledgment}
The project was started at the Simons Institute at UC Berkeley during
the ``Algorithms and Complexity in Algebraic Geometry'' semester.
Micha{\l}ek was realizing the project as a part of the PRIME DAAD
program.  Micha{\l}ek would like to thank Klaus Altmann and Bernd
Sturmfels for the great working environment.  Kahle was supported by
the research focus dynamical systems of the state Saxony Anhalt.

\bibliographystyle{amsalpha}
\bibliography{plethysm}

\end{document}